\documentclass{amsart}
\usepackage[utf8]{inputenc}
\usepackage{amsthm}
\usepackage{amsmath}
\usepackage{amssymb}
\usepackage{amscd}
\usepackage{relsize}
\usepackage{tikz-cd}
\usepackage{hyperref}
\hypersetup{
    colorlinks=true,
    linkcolor=cyan,
    filecolor=magenta,      
    urlcolor=blue}
\usepackage{cleveref}
\usepackage[left=2.5cm,top=2.5cm,bottom=3cm,right=2.5cm]{geometry}
\theoremstyle{definition}
\newtheorem{theorem}{Theorem}[section]
\newtheorem{lemma}[theorem]{Lemma}
\newtheorem{proposition}[theorem]{Proposition}
\newtheorem{remark}[theorem]{Remark}

\newtheorem{eg}[theorem]{Example}
\newtheorem{corollary}[theorem]{Corollary}

\newtheorem*{theorem*}{Theorem}
\usepackage{cleveref}

\title{Reducedness of formally unramified algebras over fields}
\author {Alapan Mukhopadhyay and Karen E. Smith}
\thanks{Both authors were partially supported by NSF grant number DMS 1801697}
\date{July 2019}

\begin{document}

\maketitle
\begin{abstract}
We prove that under suitable  graded and local hypothesis, a formally unramified algebra over a field must be reduced. We detail examples, including one due to Gabber, to  show that it is not possible to generalize these results further.
\end{abstract}
\section{Introduction}
Let $A$ be a commutative ring with  multiplicative identity,  and let $S$ be an $A$-algebra. Recall that $S$ is said to be \textit{formally unramified} over $A$ if the module of K\"ahler differentials $\Omega_{S/A}$ is zero. It is well-known that a finitely generated formally unramified algebra over a field $k$ is finite product of separable field extensions of $k$ \cite[Cor 16.16]{Eis}. Since such an algebra is always reduced, this leads to the following natural question:  

\medskip

\noindent
\textbf{Question:}\label{a natural question} Under what hypothesis is a formally unramified algebra over a field reduced?

\medskip

It is easy to find non-reduced formally unramified algebras over an arbitrary field of characteristic $p$ (see \Cref{char p example}).  However, the question is subtle when the ground field has  characteristic zero. For example,   Ofer Gabber proposed a construction of a non-reduced formally unramified algebra over an arbitrary field of characteristic zero, which we explain in \Cref{Gabber}. 

 \medskip
The main purpose of this note is to prove  that, in spite of Gabber's example,   formally unramified extensions of a perfect  field are indeed often reduced. 
For example, we show that if $A$ is a local algebra separated in its $m$-adic topology and  formally unramified over a perfect field,  then $A$ is reduced (\Cref{the local case}). As a corollary, we deduce that any Noetherian $k$-algebra formally unramified over a perfect field is reduced (\Cref{structure of locally Noetherian formally unramified algebra over a perfect field}),
a fact we have not been able to find in the literature though we expect it may be known to experts. We include an example to show that formally unramified  does not imply reducedness for Noetherian local $k$-algebras,  however,  without the assumption that $k$ is perfect  (\Cref{non-example in the Noetherian case}), quite unlike the  finite type case.

\medskip We also get positive results in the graded case: 
\begin{theorem*}
\textit{Let $R$ be an $\mathbb{N}$-graded formally unramified algebra over a perfect field $k$. If the  degree zero graded piece  of $R$  is Noetherian, then  $R$ is reduced}.  
\end{theorem*}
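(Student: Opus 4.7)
The plan is to reduce to the case $R = R_0$, where the already-established corollary --- that a Noetherian formally unramified algebra over a perfect field is reduced --- immediately finishes the argument.

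First, the projection $\pi \colon R \to R_0$ sending $\sum r_n$ to $r_0$ is a surjective $k$-algebra homomorphism, because $R_+ := \bigoplus_{n \geq 1} R_n$ is an ideal of $R$. Therefore $\Omega_{R_0/k}$ is a quotient of $R_0 \otimes_R \Omega_{R/k} = 0$, so $R_0$ is formally unramified over $k$. Combined with the hypotheses that $R_0$ is Noetherian and $k$ is perfect, the cited corollary yields that $R_0$ is reduced.

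The heart of the argument is to show $R_n = 0$ for all $n \geq 1$. The key tool is the \emph{Euler derivation} $D \colon R \to R$ defined by $D(r) = nr$ for $r \in R_n$ and extended $k$-linearly. A routine check on homogeneous elements shows $D$ is a $k$-derivation into the $R$-module $R$; since $\Omega_{R/k} = 0$, every such derivation vanishes, so $D = 0$ and hence $n \cdot R_n = 0$ for every $n$. When $\mathrm{char}(k) = 0$, this immediately gives $R_n = 0$ for $n \geq 1$. When $\mathrm{char}(k) = p > 0$, we only conclude $R_n = 0$ for $p \nmid n$ directly; however, one can regrade $R$ by setting $R^{(1)}_m := R_{pm}$ and rerun the same argument (the condition $\Omega_{R/k} = 0$ is intrinsic to the ring and is unaffected by regrading). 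Iterating, $R_n$ vanishes whenever $n = p^k m$ with $p \nmid m$ and $k \geq 0$ --- and every positive integer has such a form. Thus $R = R_0$ in either characteristic, and reducedness follows.

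The main insight is recognizing the Euler derivation as a $k$-derivation that must vanish in any formally unramified graded algebra; the iteration in positive characteristic is a mild technical wrinkle rather than a genuine obstacle.
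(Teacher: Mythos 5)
Your first step (that $R_0$, as a quotient of $R$, is formally unramified over $k$ and hence reduced by the corollary) is fine, and your Euler-derivation argument is in essence the paper's own second proof of this theorem (the proposition on the kernel of the universal derivation, together with the same regrading iteration in characteristic $p$). However, there is a genuine gap: the theorem does \emph{not} assume that the image of $k$ lies in $R_0$ --- the paper stresses exactly this point right after stating the result --- and without that assumption your map $D$ (with $D(r)=nr$ for $r\in R_n$) is not known to be a $k$-derivation. $D$ is additive, satisfies the Leibniz rule, and kills $R_0$; but to identify it with an element of $\operatorname{Der}_k(R,R)\cong\operatorname{Hom}_R(\Omega_{R/k},R)$ you must know that $D$ annihilates the image of the structure map $k\to R$, and the phrase ``extended $k$-linearly'' already presumes that each $R_n$ is a $k$-subspace, i.e.\ that $k$ maps into degree zero. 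This is not automatic: in an $\mathbb N$-graded ring a unit may have nonzero (nilpotent) components of positive degree, and a field can map into $R$ without landing in $R_0$. For instance, $\mathbb Q(t)\to \mathbb Q(t)[x]/(x^2)$ (with $x$ in degree $1$) sending $t\mapsto t+x$ is a ring homomorphism whose image is not contained in $R_0$; that particular $R$ is not formally unramified, but it shows the containment is not formal, and you cannot assume it here, since $R=R_0$ is essentially what you are trying to prove.

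The gap can be repaired in two of the three cases by replacing $k$ with the prime field $\mathbb F$, which always does sit inside $R_0$: if $\operatorname{char} k=p>0$ with $k$ perfect, or if $\operatorname{char} k=0$ with $k$ algebraic over $\mathbb Q$, then $\Omega_{k/\mathbb F}=0$, so the cotangent exact sequence gives $\Omega_{R/\mathbb F}=0$, and your $D$, being an $\mathbb F$-derivation, must vanish; this is exactly the adaptation the paper records in the remark following its alternative proof. But in characteristic zero with $k$ transcendental over $\mathbb Q$ (say $k=\mathbb Q(t)$, which is perfect), $\Omega_{k/\mathbb Q}\neq 0$ and this patch fails, which is why the paper's main proof proceeds differently: it passes to the truncations $R/R_{\geq t}$, uses the $t=1$ case to see that $R_0$ is a finite product of fields, localizes at each maximal ideal, and invokes the local theorem (a formally unramified, $m$-adically separated local algebra over a perfect field is a field) to kill the positive-degree part in every localization. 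If you add the hypothesis $k\subseteq R_0$, your proof is correct and coincides with the paper's second argument; as written, it does not prove the theorem in the stated generality.
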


Note that it is {\it not necessary} to assume that the graded ring $R$ is finitely generated over $R_0$, nor that $k\subset R_0$.    We prove this theorem, as well as some variants in which $R_0$ is not assumed to be Noetherian, in Section 3. See Theorem \ref{positive answer} and Remarks \ref{general}, \ref{reduced} and \ref{mixed}.

\bigskip

\noindent \textbf{Notations and conventions:} Every ring in this paper is assumed commutative and with  multiplicative identity. Importantly, we {\it do not} assume that rings are Noetherian, unless we explicitly state so. A triple $(R,m,k)$ represents a (not necessarily Noetherian) local ring $R$, with maximal ideal $m$ and residue field $k$.

\medskip

The set of non-negative integers is denoted by $\mathbb{N}$. The symbol $p$ denotes a positive prime integer.

\medskip

The notation $\Omega_{S/A}$ denotes the module of K\"ahler differentials of an $A$-algebra $S$, and the symbol $d$ denotes the universal derivation 
$S\rightarrow \Omega_{S/A}$. We suppress the dependence on $S$ and $A$ in the notation for $d$ to make the notation less clumsy, so it is important to pay attention to the context (that is, the target module for $d$) when dealing with several algebras or different ground rings. See 
 \cite[Tag00RM]{Stacks} for basics on K\"ahler differentials.

\medskip
\noindent
\textbf{Acknowledgements:} The authors thank Mel Hochster for kindly sharing his notes on Gabber's example (\Cref{Gabber}). We are grateful  to Shubhodip Mondal for bringing our attention to these issues, while relaying  related questions that were posed in the Arizona Winter School 2019 (see Section 4). 

\section{Examples of non-reduced formally unramified algebras}

In this section, we discuss examples of non-reduced formally unramified algebras. 
We first recall that it is easy to find such examples in prime characteristic.
\begin{eg}\label{char p example}
Fix a field  $k$ of positive characteristic $p.$ Let  $k[X]$ be the polynomial ring in one variable over $k$, and denote by  $L$ an algebraic closure of its fraction field. For $n\in \mathbb{N}$, let $X^{1/p^n} \in L$ be the $p^n$-th root of $X$. 
Consider the $k$-subalgebra $k[X^{1/p^{\infty}}]$
of $L$ generated by the $X^{1/p^n}$ as we range over all natural numbers $n$.
Let $A$ be the quotient ring 
$\frac{k[X^{1/p^{\infty}}]}{(X)}$. 

Denote the image of $X$ in $A$ by $x$. Observe that  $d(x^{1/p^n})= d((x^{1/p^{n+1}})^p)=0$ for each $n \in \mathbb{N}$. Since the
$d(x^{1/p^n})$ generate $\Omega_{A/k}$ as an $A$-module, we see that $\Omega_{A/k}$ is zero. 
But, of course, the algebra $A$ is non-reduced: for example $x^{1/p}$ in $A$ is one of many non-zero nilpotent elements.
\end{eg}

We now describe an example of Ofer Gabber:

\begin{theorem}\label{Gabber}
Fix any field $k$ of characteristic zero. There exists a formally unramified local $k$-algebra that is not reduced.
\end{theorem}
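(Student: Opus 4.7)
The plan is to construct an explicit local $k$-algebra which is formally unramified but not reduced. The key difficulty is that, unlike \Cref{char p example}, the Frobenius trick is unavailable in characteristic zero because every positive integer $n$ is a unit. Instead, the approach is to place a single nonzero nilpotent element $x$ at the bottom of an infinite tower of roots $x = y_n^{e_n}$ with $e_n \to \infty$, and then pass to a completion in which $dx$ is forced to lie in the intersection of all powers of the maximal ideal acting on the differential module, hence vanishes.

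Concretely, I would start with a polynomial ring on countably many variables $y_2, y_3, \ldots$ over $k$ and impose relations of the form $y_n^{e_n} = x$ for a chosen sequence $e_n \to \infty$ (for instance $e_n = n!$), together with a nilpotency relation $x^2 = 0$. Localizing at the maximal ideal $\mathfrak{m} = (y_2, y_3, \ldots)$ and passing to the $\mathfrak{m}$-adic completion should yield a local ring $(R, \mathfrak{m})$ in which $x$ is nonzero but nilpotent. The relation $y_n^{e_n} = x$ gives
\[
 dx \;=\; e_n\, y_n^{e_n-1}\, dy_n \;\in\; \mathfrak{m}^{\,e_n - 1}\, \Omega_{R/k}
\]
for every $n$, which places $dx$ in $\bigcap_N \mathfrak{m}^N \Omega_{R/k}$. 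A similar sub-tower argument applied to each generator $y_n$, perhaps after refining the construction to nest further towers of roots under each $y_n$, should place every $dy_i$ in the same intersection. If one can then argue that $\bigcap_N \mathfrak{m}^N \Omega_{R/k} = 0$, it follows that $\Omega_{R/k} = 0$. Finally one must check that $x \ne 0$ in $R$ by exhibiting an explicit $k$-algebra map to a nontrivial ring such as $k[\epsilon]/(\epsilon^2)$ sending $x \mapsto \epsilon$.

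The main obstacle, and the part I expect to be delicate, is the separatedness of $\Omega_{R/k}$. Even after $\mathfrak{m}$-adic completion, the differential module need not be $\mathfrak{m}$-adically separated in this non-Noetherian setup, so the slogan ``a differential lying in every power of $\mathfrak{m}$ must vanish'' is not automatic. The genuine technical heart of Gabber's construction lies in arranging simultaneously (i) enough relations that each generator's differential is forced to zero by the given relations, and (ii) few enough relations that the nilpotent element $x$ survives as a nonzero class in $R$. Threading this needle is the subtle point—likely requiring a carefully tuned completion or an auxiliary set of relations that makes the presentation of $\Omega_{R/k}$ concrete enough to verify term by term—and it is precisely where the characteristic-zero obstruction concentrates.
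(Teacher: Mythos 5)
Your proposal identifies the right difficulty but does not overcome it, and its two load-bearing steps are precisely the ones that fail. First, the root-tower relations $y_n^{e_n}=x$ only ever show that $dx$ is infinitely divisible, i.e.\ $dx\in\bigcap_N \mathfrak{m}^N\Omega_{R/k}$; they never make it zero, in contrast to \Cref{char p example} where $d(y^p)=py^{p-1}dy$ vanishes on the nose. There is no general principle forcing $\bigcap_N \mathfrak{m}^N\Omega_{R/k}=0$: any example answering the theorem must be non-Noetherian (a Noetherian local formally unramified algebra over a perfect field is a field, by \Cref{the local case}), so Krull-type separatedness is unavailable, and a divisible module over such a ring need not vanish. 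Passing to the $\mathfrak{m}$-adic completion does not repair this, because K\"ahler differentials commute with direct limits but not with completions, and $\Omega_{\widehat{R}/k}$ of a complete non-Noetherian local ring need not be $\widehat{\mathfrak{m}}$-adically separated; so the "slogan" you flag as delicate is in fact the entire problem, and it is left unproved. Second, your proposed verification that $x\neq 0$ cannot work as stated: a $k$-algebra map $\phi$ to $k[\epsilon]/(\epsilon^2)$ with $\phi(x)=\epsilon$ would require $\phi(y_n)^{e_n}=\epsilon$ with $e_n\geq 2$, which is impossible since any unit of $k[\epsilon]/(\epsilon^2)$ has unit powers and any non-unit squares to zero. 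So neither the unramifiedness nor the non-reducedness of your candidate ring is established.

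The actual construction uses a different mechanism, which is worth internalizing because it sidesteps divisibility entirely. By Euler's relation, for $F=X^2Y^2+X^n+Y^n$ the class $f$ of $F$ in $B=k[[X,Y]]/(\frac{\partial F}{\partial X},\frac{\partial F}{\partial Y})$ satisfies $df=\frac{\partial F}{\partial X}dx+\frac{\partial F}{\partial Y}dy=0$ \emph{exactly} (the coefficients are literally zero in $B$), while $f\neq 0$ and $f^2=0$ (Lemma \ref{preparatory}); so one obtains a finite-dimensional local algebra containing a square-zero element whose differential vanishes without any limiting process. One then bootstraps (Lemma \ref{killing differentials}): given any finite-dimensional local $k$-algebra $R$ and $r\in\mathfrak{m}$, tensor with enough copies of $B$ and identify $r$ with a sum of copies of $f$, checking injectivity of $R\hookrightarrow R'$ by a Gorenstein splitting argument; iterating kills the image of all of $\Omega_{R/k}$. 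The final example is the direct limit of such a chain, where one uses that $\Omega$ commutes with direct limits — the categorical fact that is actually available, as opposed to the compatibility with completions your approach would need. If you want to salvage your outline, you would have to replace the divisibility argument with some relation that kills $dx$ identically, which is exactly what the Jacobian-ideal/Euler-relation trick provides.
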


\smallskip
\noindent
\begin{remark}
 Gabber's example in Theorem \ref{Gabber} is necessarily non-Noetherian, as is Example \ref{char p example}. As we soon prove, Noetherian local formally unramified algebras over a perfect field are always reduced; see Corollary \ref{structure of locally Noetherian formally unramified algebra over a perfect
field}.
\end{remark}

\medskip
\begin{proof}[Proof of Theorem \ref{Gabber}]
 We will construct  a direct limit of local  $k$-algebras
\begin{equation}\label{sequence}
(R_0, m_0, k) \hookrightarrow (R_1, m_1, k) \hookrightarrow (R_2, m_2, k) \hookrightarrow 
\dots
\end{equation}
satisfying
\renewcommand{\theenumi}{\roman{enumi}}
\begin{enumerate}
    \item $k\hookrightarrow R_0$ is  a proper inclusion.
    \item Each algebra $R_i$ is a finite dimensional local $k$-algebra,  with the natural composition $k\hookrightarrow R \twoheadrightarrow R_i/m_i $ an isomorphism.
    
    \item The local $k$-algebra inclusion $R_i \subseteq R_{i+1}$ induces the zero map $\Omega_{R_i/k}\rightarrow \Omega_{R_{i+1}/k}$ for each $i \in \mathbb{N}$.
\end{enumerate}
Then to produce the local $k$-algebra satisfying the conclusion of Proposition \ref{Gabber}, we take the direct limit, setting
$$R_{\infty}:= \underset{i \in \mathbb{N}}{\varinjlim}\, R_i, \, \, \, \, m_{\infty}:= \underset{i \in \mathbb{N}}{\varinjlim} \, m_i.$$
The $k$-algebra $R_{\infty}$ is a local ring with a non-zero  maximal ideal $m_{\infty}$. Since the construction of modules of Kahler differentials commutes with taking direct limits 
\cite[Tag 00RM]{Stacks}),  we have $\Omega_{R_{\infty}/k}= \underset{i \in \mathbb{N}} {\varinjlim}\, \Omega_{R_i/k}=0$. Furthermore, because each $R_i$ is finite dimensional, we know  each $m_i$ is nilpotent, and so each element of $m_{\infty}$ is nilpotent. So $(R_{\infty}, m_{\infty}, k)$ serves as an example proving Theorem \ref{Gabber}.

\medskip
 
 To construct the sequence (\ref{sequence}), we begin by taking $R_0$ to be $B$ as defined in the next lemma.

\begin{lemma}\label{preparatory}
Fix a field $k$ of characteristic zero, and an integer $n\geq 5$. 
Let $F = X^2Y^2+X^n+Y^n$ in the power series ring $k[[X,Y]]$. Let  $B$ denote the quotient ring $k[[X, Y]]/(\frac{\partial F}{\partial X}, \frac{\partial F}{\partial Y})$.
 Then,
\renewcommand{\theenumi}{\roman{enumi}}
\begin{enumerate}
 \item $B$ is an Artinian local ring; that is, $\text{dim}_k({B})< \infty$ 
 \item The image $f$ of $F$ in the quotient ring $B$ is not zero but its square is zero.
 \item The element $df$ is zero in $\Omega_{B/k}$.
\end{enumerate}
\end{lemma}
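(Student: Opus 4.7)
The plan is to handle (i), (iii), (ii) in that order, with (ii) requiring the bulk of the work.

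For (i), the goal is to show $J := (\partial F/\partial X, \partial F/\partial Y)$ is $\mathfrak{m}$-primary in $k[[X,Y]]$, which forces $B$ to be Artinian. Since $k[[X,Y]]$ is a two-dimensional regular local ring, it suffices to rule out height-one primes $(g)$ containing $J$. Factoring $\partial F/\partial X = X(2Y^2 + nX^{n-2})$ and $\partial F/\partial Y = Y(2X^2 + nY^{n-2})$, the cases $(g) = (X)$ or $(g) = (Y)$ are excluded because reducing the other partial leaves a nonzero power of the surviving variable; and in the remaining case $X, Y \notin (g)$, the relations $2Y^2 \equiv -nX^{n-2}$ and $2X^2 \equiv -nY^{n-2}$ in the one-dimensional domain $R/(g)$ multiply to give $4 = n^2 X^{n-4}Y^{n-4}$, impossible for $n \geq 5$ since the right side lies in the maximal ideal while $4$ is a unit.

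Part (iii) is immediate: $df = (2xy^2 + nx^{n-1})\,dx + (2x^2y + ny^{n-1})\,dy$ and both coefficients vanish in $B$ as they are generators of $J$. For (ii), the key first step is to simplify $f$: from $X\cdot \partial F/\partial X = 2X^2Y^2 + nX^n \in J$ one gets $x^n \equiv -\tfrac{2}{n}x^2y^2 \pmod{J}$, and symmetrically for $y^n$, so $f \equiv \tfrac{n-4}{n}x^2y^2 = -\tfrac{n-4}{2}x^n$ in $B$.

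To conclude (ii), I need $f \neq 0$ and $f^2 = 0$. For $f \neq 0$, I would show $F \notin J$ by degree comparison. Supposing $F = A\,\partial F/\partial X + B\,\partial F/\partial Y$ with $A, B \in k[[X,Y]]$, the degree-$3$ equation forces $A_0 = B_0 = 0$; the degree-$4$ equation then determines the linear parts as $A_1 = a_1 X$ and $B_1 = b_2 Y$ with $a_1 + b_2 = \tfrac{1}{2}$; finally the coefficients of $X^n$ and $Y^n$ in the degree-$n$ equation can only come from $A_1 \cdot nX^{n-1}$ and $B_1 \cdot nY^{n-1}$, forcing $na_1 = nb_2 = 1$ and hence $a_1 + b_2 = 2/n$, incompatible with $\tfrac{1}{2}$ for $n \geq 5$. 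For $f^2 = 0$, I would prove $\mathfrak{m}^{n+1} = 0$ in $B$, which suffices because $f \in \mathfrak{m}^n$ yields $f^2 \in \mathfrak{m}^{2n} \subseteq \mathfrak{m}^{n+1}$. The main step is identifying the initial ideal $L(J) \subseteq \operatorname{gr}_{\mathfrak{m}}(k[[X,Y]]) = k[X,Y]$: the generators' lowest-degree parts contribute $XY^2$ and $X^2Y$, while the syzygy $X\,\partial F/\partial X - Y\,\partial F/\partial Y = n(X^n - Y^n)$ contributes $X^n - Y^n$. Then a short check shows every degree-$(n+1)$ monomial lies in $(XY^2, X^2Y, X^n - Y^n)$ --- the pure powers via identities like $X^{n+1} = X(X^n - Y^n) + Y^{n-2}\cdot XY^2$, and the mixed terms via $XY^2$ or $X^2Y$. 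This yields $\operatorname{gr}_{\mathfrak{m}}(B)_{n+1} = 0$, so $\mathfrak{m}^{n+1} = \mathfrak{m}\cdot\mathfrak{m}^{n+1}$, and Nakayama's lemma finishes. The main obstacle is this last step, specifically the recognition that the syzygy produces the crucial element $X^n - Y^n$ of $L(J)$ that pins down the socle degree.
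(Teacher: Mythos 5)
Your proposal is correct, and for the heart of the lemma (part (ii)) it follows a genuinely different route from the paper. For $f\neq 0$, the paper writes $\frac{\partial F}{\partial X}=XF_1$, $\frac{\partial F}{\partial Y}=YF_2$ and uses unique factorization in $k[[X,Y]]$ to reduce a hypothetical relation $X^2Y^2=GXF_1+HYF_2$ to $XY=G_1F_1+H_1F_2$, which dies on comparing order-two terms; your direct homogeneous-coefficient comparison in degrees $3$, $4$ and $n$ (forcing $a_1+b_2=\tfrac12$ and $a_1+b_2=\tfrac2n$) reaches the same contradiction without invoking the UFD property, at the cost of a slightly longer bookkeeping argument. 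For $f^2=0$, the paper proves the explicit membership $Y^2\in(F_1,F_2)$ via $F_1-\tfrac n2X^{n-4}F_2$ being a unit times $Y^2$, hence $XY^3\in(\frac{\partial F}{\partial X},\frac{\partial F}{\partial Y})$ and $f^2=c\,x^4y^4=0$ in two lines; you instead bound the socle degree, showing $\operatorname{gr}_{\mathfrak m}(B)_{n+1}=0$ from the leading forms $XY^2$, $X^2Y$ and the syzygy element $X\frac{\partial F}{\partial X}-Y\frac{\partial F}{\partial Y}=n(X^n-Y^n)$, then conclude $\mathfrak m^{n+1}=0$ by Nakayama and use $f\in\mathfrak m^n$ (via $f=-\tfrac{n-4}{2}x^n$). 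Both are valid; the paper's computation is shorter and avoids the standard but nontrivial identification $\operatorname{gr}_{\mathfrak m}(B)\cong k[X,Y]/L(J)$ that your argument quietly relies on, while yours yields the extra structural information $\mathfrak m^{n+1}=0$ and makes transparent why the syzygy pins down the vanishing degree. Your treatment of (i) (ruling out height-one primes containing the Jacobian ideal) is a hands-on version of the paper's ``no common factor, hence regular sequence, hence $\mathfrak m$-primary'' argument, and (iii) is identical.
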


Before proving Lemma \ref{preparatory}, we  point out how to use it to  construct the sequence (\ref{sequence}) of algebras $R_i$.   Having set  $R_0 = B$, we  can then inductively produce $(R_i, m_i, k)$ from $(R_{i-1}, m_{i-1}, k)$  using  Lemma \ref{killing differentials} below.  

\begin{lemma}\label{killing differentials}
Fix a field $k$ of characteristic zero. Let $(R, m, k)$ be any local $k$-algebra of finite dimension over $k$ such that  the composite map $k \hookrightarrow R \rightarrow R/m$ is an isomorphism. Then there exists a finite dimensional local $k$-algebra extension  $(R, m, k)\hookrightarrow (\widetilde{R},  \widetilde{m}, k)$ (with
$k \hookrightarrow \tilde{R} \rightarrow \tilde{R}/\tilde{m}$ also an isomorphism) such that 
the induced map $\Omega_{R/k} \rightarrow \Omega_{\widetilde {R}/k}$ is the zero map.

\end{lemma}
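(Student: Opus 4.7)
The plan is to reduce the lemma to killing a single differential at a time. Since $R$ is finite-dimensional over $k$ with residue field $k$, the maximal ideal $m$ is a finite-dimensional $k$-vector space with basis $r_1, \dots, r_N$; because the universal derivation $d$ vanishes on $k$, the elements $dr_1, \dots, dr_N$ generate $\Omega_{R/k}$ as an $R$-module. It therefore suffices to establish the following sub-claim and iterate: for any finite-dimensional local $k$-algebra $(S, m_S, k)$ with residue field $k$ and any $s \in m_S$, there is a finite-dimensional local $k$-algebra extension $S \hookrightarrow S'$ (again with residue field $k$) such that $ds = 0$ in $\Omega_{S'/k}$. Applying this successively to $R$ with $r_1, r_2, \dots, r_N$ yields a tower $R \hookrightarrow R^{(1)} \hookrightarrow \cdots \hookrightarrow R^{(N)} =: \widetilde R$, each step killing the next generator. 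Since each $dr_i$ becomes zero in $\Omega_{R^{(i)}/k}$ and remains zero further along the tower by functoriality, the induced map $\Omega_{R/k} \to \Omega_{\widetilde R/k}$ is zero.

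For the sub-claim I would use the Jacobian construction of Lemma \ref{preparatory}. With $F = X^2Y^2 + X^n + Y^n$ and $B := k[[X,Y]]/(\partial F/\partial X, \partial F/\partial Y)$ as there, define
$$S' := S[[X,Y]] \big/ \bigl(F(X,Y) - s,\; \partial F/\partial X,\; \partial F/\partial Y\bigr).$$
Since $S$ is Artinian, one has $S[[X,Y]] \cong S \otimes_k k[[X,Y]]$, so $S'$ is identified with $(S \otimes_k B)/(1 \otimes f - s \otimes 1)$, where $f$ is the image of $F$ in $B$. The ring $S \otimes_k B$ is Artinian local with residue field $k$ (the tensor product over $k$ of two such rings), and $1 \otimes f - s \otimes 1$ lies in its maximal ideal, so $S'$ is again Artinian local with residue field $k$. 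The identification $s = F(X,Y)$ forces $ds = dF = (\partial F/\partial X)\,dX + (\partial F/\partial Y)\,dY = 0$ in $\Omega_{S'/k}$.

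The main obstacle is verifying that the structure map $S \hookrightarrow S'$ is \emph{injective}. The difficulty is already visible in the identity
$$(1 \otimes f - s \otimes 1)(1 \otimes f + s \otimes 1) = 1 \otimes f^2 - s^2 \otimes 1 = -s^2 \otimes 1,$$
where $f^2 = 0$ is used; this forces $s^2$ into the kernel, so the naive construction is injective only when $s^2 = 0$ in $S$. To accommodate an arbitrary $s \in m_S$, my plan is first to strengthen Lemma \ref{preparatory} to produce, for each $L \geq 2$, a finite-dimensional Artinian local $(B_L, m_{B_L}, k)$ with an element $f_L \in m_{B_L}$ satisfying $df_L = 0$ and $f_L^{L+1} = 0 \neq f_L^L$ --- for instance, by replacing $F$ with a polynomial of higher cross-degree whose Jacobian algebra supports the desired nilpotency --- and then to invoke the refined lemma with $L$ at least the nilpotency of $s$. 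Injectivity of $S \to (S \otimes_k B_L)/(1 \otimes f_L - s \otimes 1)$ should then be established by decomposing any putative relation $(1 \otimes f_L - s \otimes 1) c = a \otimes 1$ along $S \otimes_k B_L = (S \otimes 1) \oplus (S \otimes_k m_{B_L})$ to extract $a = -s c_0$, and then iterating multiplication by $(1 \otimes f_L + s \otimes 1)$ together with the $m_{B_L}$-adic filtration to push $a$ into arbitrarily high powers of the nilpotent element $s$, forcing $a = 0$. Both producing $B_L$ with arbitrarily large $L$ and the filtration-plus-nilpotency bookkeeping for the injectivity step are where the technically most delicate work lies.
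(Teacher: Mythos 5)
You have the right skeleton --- reduce to killing one differential at a time, and kill $ds$ by adjoining the Jacobian algebra $B$ of Lemma \ref{preparatory} and identifying $s$ with an element whose differential vanishes --- and you correctly spot the essential obstacle: since $f^2=0$ in $B$, the naive quotient $(S\otimes_k B)/(1\otimes f - s\otimes 1)$ kills $s^2$, so injectivity can only be expected when the nilpotency order of the auxiliary element is at least that of $s$. But the two steps you need to get past this obstacle are exactly the ones you leave unproven, so the proposal has genuine gaps. First, the existence of $(B_L, f_L)$ with $df_L=0$ and $f_L^{L}\neq 0 = f_L^{L+1}$ is only asserted (``replacing $F$ with a polynomial of higher cross-degree''); verifying the precise nilpotency order of the image of such an $F$ in its own Jacobian ring is a nontrivial computation of the same kind as Lemma \ref{preparatory}, and you do not do it. The paper avoids any new computation: if $s^t=0\neq s^{t-1}$, it takes $B_t=B\otimes_k\cdots\otimes_k B$ ($t-1$ factors) and $g=\sum_i 1\otimes\cdots\otimes f\otimes\cdots\otimes 1$; then $dg=0$, $g^t=0$, and $g^{t-1}=(t-1)!\,f\otimes\cdots\otimes f\neq 0$ --- note this is precisely where characteristic zero is used, a hypothesis your sketch never invokes.

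Second, your injectivity argument (``filtration-plus-nilpotency bookkeeping'') is not carried out, and you yourself flag it as the delicate point. The paper replaces it by a short structural argument: with $A_t=k[z]/(z^t)$ mapping into $R$ and into $B_t$ by $z\mapsto r$ and $z\mapsto g$, the quotient $R'=(R\otimes_k B_t)/(r\otimes 1-1\otimes g)$ maps onto $R\otimes_{A_t}B_t$; since $A_t$ is Artinian Gorenstein, hence injective as a module over itself, the $A_t$-module inclusion $A_t\hookrightarrow B_t$ splits, so $R\to R\otimes_{A_t}B_t$ is split injective and therefore $\iota:R\to R'$ is injective. The same device would also repair your version with no bookkeeping at all: once a suitable $(B_L,f_L)$ exists, identify $S'$ with $S\otimes_{k[z]/(z^N)}B_L$, where $N$ is the nilpotency order of $f_L$ (which only needs to be at least that of $s$), and split $k[z]/(z^N)\hookrightarrow B_L$. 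As written, though, both the construction of $B_L$ and the injectivity step are missing, so the proof is incomplete precisely where the lemma is hard.
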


Thus Theorem \ref{Gabber} is proved as soon as we prove the preceding two
lemmas,   which we now do in turn, using Lemma \ref{preparatory} to establish Lemma \ref{killing differentials}.

\begin{proof}[Proof of Lemma \ref{preparatory}]
For (i), we use the fact that  $k[[X, Y]]$ is a two-dimensional UFD \cite{Aus}. Since $\frac{\partial F}{\partial X},  \frac{\partial F}{\partial Y}$ have no common factors,
they form a regular sequence in the power series ring $k[[X, Y]]$. In particular, the ideal $(\frac{\partial F}{\partial X},  \frac{\partial F}{\partial Y})$ is primary to the maximal ideal, so that the quotient $B = k[[X, Y]]/(\frac{\partial F}{\partial X},  \frac{\partial F}{\partial Y}) $ is finite dimensional over $k$.

For (ii), we introduce some notation. Set
 $F_1= 2Y^2+nX^{n-2}$ and  $F_2 = 2X^2+ nY^{n-2}$, so that  $\frac{\partial F}{\partial X}= XF_1$ and $ \frac{\partial F}{\partial Y}= YF_2$.  We also use lower case letters to indicate images in the quotient $B =  k[[X, Y]]/(\frac{\partial F}{\partial X},  \frac{\partial F}{\partial Y})$. 
 
 Note that 
\begin{equation}\label{feq}
F-\frac{1}{n}(X\frac{\partial F}{\partial X}+ Y\frac{\partial F}{\partial Y})=(1- \frac{4}{n})X^2Y^2,
\end{equation}
so that $f = (1-\frac{4}{n})(xy)^2$ in $B$. To see that this element is non-zero, we argue by contradiction. Lifting to the power series ring, 
the statement that $f=0$ would mean that  
$$X^2Y^2 = G\frac{\partial F}{\partial X}+ H\frac{\partial F}{\partial Y} = GXF_1 + HYF_2$$
 for some power series $G, H$.
 Using the unique factorization property, we see that $G = YG_1$ and $H=XH_1$ for some power series $G_1, H_1$.  So, we get $XY= G_1F_1+ H_1F_2$. Comparing order two terms on both sides of the last equality, we get a contradiction. 
 So $f\neq 0$ in $B$.

To show $f^2=0$ in $B$, we again invoke Equation (\ref{feq}), observing that it is enough to show that $(x^2y^2)^4 = 0$ in $B$. In fact, we'll show that $xy^3 = 0$ in $B$. Computing in the power series ring, we have
$$
F_1 - \frac{n}{2}X^{n-4}(F_2) = Y^2(2-\frac{n^2}{2}X^{n-4}Y^{n-2}) \in (F_1, F_2)
$$
and since $2-\frac{n^2}{2}X^{n-4}Y^{n-2}$ is a unit in $k[[X, Y]]$, we have  $Y^2 \in (F_1, F_2).$ Multiplying by $XY$, we conclude that 
$$
XY^3 \in  (XF_1, YF_2) = (\frac{\partial F}{\partial X},  \frac{\partial F}{\partial Y}),
$$
whence $xy^3 = 0$ in $B$. In particular, $f^2 = x^4y^4 = 0$ in $B$.

(iii)  The universal derivation $B\rightarrow \Omega_{B/k}$ sends $f$ to $df = 
\frac{\partial F}{\partial X} dx + \frac{\partial F}{\partial Y}dy = 0$, where abusing notation, the notation 
$\frac{\partial F}{\partial X} $ and $\frac{\partial F}{\partial Y} $ denotes the images in $B$. 
Since these coefficient are zero in $B$, we see that $df = 0$. 

\end{proof}

\begin{proof}[Proof of \Cref{killing differentials}]
Fix a finite dimensional algebra $(R, m, k)$ as in the lemma. It suffices to show that, for a given non-zero
 $r \in m$, we can construct a finite dimensional  local $k$-algebra $(R', m', k)$
 (with composite $k \hookrightarrow R' \twoheadrightarrow R'/m'$ an isomorphism)
 and a local $k$-algebra injection $R \hookrightarrow R'$ such that the induced map $\Omega_{R/k} \rightarrow \Omega_{R'/k}$ sends $dr$ to zero.
 Indeed, first observe that our hypothesis implies that $\Omega_{R/k}$ is generated by elements $r$ where $r\in m$. So choosing a $k$-basis $e_1, e_2, \ldots, e_l$ for $m$,
 it is clear that we simply need to repeat the construction $l$   times (first for $r=e_1$ and then for $r$ the image of $e_2$ and so on), to get $(\tilde{R}, \tilde{m}, k)$ together with a local $k$-algebra injection $R \hookrightarrow \tilde{R}$ such that the induced  map $\Omega_{R/k} \rightarrow \Omega_{\tilde{R}/k}$ sends each $de_i$ to zero. So $\tilde R$ can be taken to be the finite dimensional local $k$-algebra guaranteed by the lemma.
 
 \medskip
 So fix non-zero $r \in m $. Let $t \in \mathbb{N}$ be such that $r^t=0$ but $r^{t-1}\neq 0$. Now, for $B$ as in \Cref{preparatory}, define $B_{t}$ to be the tensor product of $t-1$ copies of $B$ over $k$. For $1 \leq i \leq t-1$, let $g_i= 1 \otimes \ldots \otimes f \otimes \ldots 1 \in B_t$ (where $f$ is at the $i$-th spot), and set $g= \sum \limits_{i=1}^{t-1} g_i$.  Define $R'= R \otimes_k B_t/(r\otimes 1- 1 \otimes g)$.
 
Observe that there is a canonical $k$-algebra  map $\iota: R \rightarrow R'$, and that  $R'$ is a finite dimensional local $k$-algebra with residue field $k$. It remains to show that the induced map $\Omega_{R/k} \rightarrow \Omega_{R'/k}$ sends $dr$ to zero and that $\iota$ is injective. To the first end, we invoke 
\Cref{preparatory}, which tells us  that that $df =0$ in $\Omega_{B/k}$, so that each $dg_i$ is zero in 
$\Omega_{B_t/k}$, thus $dg$ is zero in $\Omega_{B_t/k}$ as well. Because $g$ and $r$ get identified in $R'$, it follows also that the 
natural image of $dr$ in $\Omega_{R'/k}$ is $dg =0$ as well. That is, the image of $dr $ under the natural map  
$ \Omega_{R/k} \rightarrow \Omega_{R'/k}
$
induced by $\iota$ is zero, as needed. 

 \medskip
It remains only to show that  $\iota: R \rightarrow R'$ is injective. For this, observe that in $B_t$, we have that $g_i^2= 0$ for all $i$, so that $g^t=0$; however, $g^{t-1}= (t-1)!f \otimes f \otimes \ldots \otimes f$ is not zero. 
Set $A_t = k[z]/z^t$. 
We have $k$-algebra injections $A_t \hookrightarrow R$ and $A_t \hookrightarrow B_t$ sending $z$ to $r$ and  to $g$ respectively, 
so that we can consider both  $R$ and $B_t$ as $A_t$-algebras. The natural map $R \otimes_k B_t \rightarrow R \otimes_{A_t}B_t$ kills $r\otimes1 - 1 \otimes g$, so it factors through $R'$.  Thus to prove the injectivity of $\iota$,  it suffices to show that the composite $R \overset{\iota}\longrightarrow R' \rightarrow R \otimes_{A_t}B_t$ is injective. For this, note that  $A_t$ is finite dimensional Gorenstein algebra, and hence injective as an $A_t$ module \cite[Prop 21.5]{Eis}, which means that the $A_t$-module map $A_t \hookrightarrow B_t$ splits. Tensoring with $R$, we see that the composition $R \rightarrow R \otimes_{A_t}B_t$, and hence $\iota,$
splits as well. So $\iota$ is injective.

\end{proof}

\end{proof}

\section{Reducedness of Local and Graded Unramified algebras}
In this section, we establish affirmative answers to Question 1 about the reduced-ness of formally unramified algebras.
We first point out a straightforward result in the local case under suitable finiteness conditions:
\begin{theorem}\label{the local case}
Let $(R,m,k)$ be a local algebra over a field $L$.  Assume that 
\begin{enumerate}
    \item $R$ is $m$-adically separated, meaning that $\underset{n \in \mathbb{N}}{\bigcap}m^n=0$; and
    \item The field extension given by the composite $L \rightarrow R \rightarrow k$ is {\it separable.}{\footnote{Recall that  {\it algebraic} field extension $L \subseteq k$ is  \textbf{separable} if the minimal polynomial of any element of $k$ over $L$ has distinct roots in $\overline L$. 
An arbitrary field extension $L\subseteq k$ is \textbf{separable}  if for every sub-extension $L \subseteq L' \subseteq k$ with $L'$ finitely generated over $L$, $L'$ admits a transcendence basis $\{X_1, \ldots, X_r\}$ over $L$, such that $L(X_1, \ldots, X_r) \subseteq L'$ is a separable algebraic extension.}}
\end{enumerate}
\smallskip
  If $\Omega_{R/L}=0$, then $R$ is a field. 
\end{theorem}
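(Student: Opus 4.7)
The plan is to show that the maximal ideal $m$ is zero, whence $R = k$. The main tool is the second fundamental exact sequence for the composite of ring maps $L \to R \to k$,
$$m/m^2 \xrightarrow{\delta} \Omega_{R/L} \otimes_R k \to \Omega_{k/L} \to 0,$$
where $\delta$ sends $\bar r$ to $dr \otimes 1$. If one can show that $\delta$ is \emph{injective}, then $\Omega_{R/L} = 0$ forces $m/m^2 = 0$, i.e.\ $m = m^2$; iterating gives $m = m^n$ for every $n$, and the $m$-adic separatedness hypothesis then yields $m \subseteq \bigcap_n m^n = 0$, so that $R = k$.

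The heart of the argument is therefore the injectivity of $\delta$, which is exactly where the separability of $L \to k$ enters. The strategy is to produce a left inverse of $\delta$. Because $L \subseteq k$ is a separable field extension, $k$ is a formally smooth $L$-algebra for the discrete topology, so the surjection $R/m^2 \twoheadrightarrow k$ admits an $L$-algebra section $\sigma: k \to R/m^2$. The image $\sigma(k)$ together with $m/m^2$ gives a direct sum decomposition $R/m^2 = \sigma(k) \oplus m/m^2$ as $L$-vector spaces, so every element $a$ of $R$ has a unique image of the form $\sigma(\bar a) + a_1$ with $a_1 \in m/m^2$. The assignment $D: a \mapsto a_1$ is clearly $L$-linear, and a short computation using $(m/m^2)^2 = 0$ shows that $D$ is an $L$-derivation from $R$ into $m/m^2$, where the target is regarded as an $R$-module through the quotient $R \to k$. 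By the universal property of $\Omega_{R/L}$, this derivation factors through a map $\Omega_{R/L} \otimes_R k \to m/m^2$ whose composition with $\delta$ is the identity on $m/m^2$; in particular, $\delta$ is injective.

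The main obstacle is obtaining the section $\sigma$. This rests on the classical equivalence between separability (in precisely the sense of the footnote) and formal smoothness for field extensions equipped with the discrete topology, a result I would cite from a standard reference rather than reprove. Granted that equivalence, the rest of the argument is essentially mechanical, and the two hypotheses of the theorem play complementary roles: separability is used to obtain $m = m^2$, and $m$-adic separatedness is used to pass from $m = m^2$ to $m = 0$.
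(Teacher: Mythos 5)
Your proof is correct and takes essentially the same route as the paper's: both pass to the square-zero quotient $R/m^2$, use separability of $k$ over $L$ (via formal smoothness, which is also what powers the Cohen-structure-theorem step in the paper) to lift the residue field along $R/m^2 \twoheadrightarrow k$, deduce $m=m^2$, and then finish with $m$-adic separatedness. The only difference is cosmetic: the paper, once it has the coefficient field $k \hookrightarrow R/m^2$, cites the isomorphism $m/m^2 \cong k \otimes \Omega_{(R/m^2)/k}$ and uses $\Omega_{(R/m^2)/k}=0$, whereas you split the conormal map $m/m^2 \to \Omega_{R/L}\otimes_R k$ directly with the hand-built derivation $D$ --- the same computation in slightly different packaging.
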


\medskip
The   separability assumption in  \Cref{the local case} is necessary, as the following example shows:
\begin{eg}\label{non-example in the Noetherian case}
Fix $L= \mathbb{F}_p(x)$, the function field in the variable $x$ over $\mathbb{F}_p$
and let $k= \mathbb{F}_p(x^{\frac{1}{p^{\infty}}})$ be the perfection of $L$. We will construct a Noetherian local $k$-algebra which is formally unramified over $L$ but which is not reduced.

For $f(x) \in L$, let $f'(x)$ denote  the derivative of $f(x)$ with respect to $x$.  Viewing $L$ as a subfield of $k$, we can also view $f'(x)$ as an element in $k$.
Set $A= k[Z]/(Z^2)$, and let $z$ be the image of $Z \in k[Z]$ in $A$. Consider the
additive map $\phi: L \rightarrow A$ given by $\phi(f(x))= f(x)+f'(x)z$. It is not hard to verify that $\phi$ is a ring homomorphism, using the fact that $z^2=0$. View $A$ as an $L$-algebra using this map (note: we are {\it not} using the ``obvious" $L$ structure induced by the inclusions $L\subset k \subset A$). 
Clearly $A$ is a non-reduced Noetherian local $L$-algebra. 

We now verify that $\Omega_{A/L}=0$. For this, it suffices to check that the differential $da$ is zero  in $\Omega_{A/L}$  for each  $a \in A.$ 
Since $k$ is perfect, we can write  $a= g_1^p + g_2^pz$, for some $g_1, g_2 \in k$. So $da=g_2^pdz$. Now, $dz$ is zero since $dz= d((x^{1/p})^p+z)= d(\phi(x))$.
\end{eg}



\bigskip

Before proving \Cref{the local case}, 
we point out some  consequences. 
\begin{corollary}\label{structure of locally Noetherian formally unramified algebra over a perfect
field} Fix a perfect{\footnote{Recall that the  field  $k$ is \textbf{perfect} if every field extension is separable. Equivalently, 
 $k$ is  perfect if and only if either $k$ has characteristic zero or $k=k^p$, where $p>0$ is the characteristic of $k$; see \cite[Tag 05DU]{Stacks}.
}}
  ground field $k$.
\renewcommand{\theenumi}{\roman{enumi}}
\begin{enumerate}
    \item Let $A$ be a $k$-algebra with the property that the localization  at every maximal ideal is Noetherian. If $\Omega_{A/k}=0$, then $A$ is reduced.
    \item Let $A$ be a Noetherian $k$-algebra. If $\Omega_{A/k}=0$, then $A$ is finite product of perfect fields.
\end{enumerate}
\end{corollary}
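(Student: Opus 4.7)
The plan is to deduce both parts from \Cref{the local case} by localizing $A$ at each maximal ideal. For (i), fix a maximal ideal $\mathfrak{m}$ of $A$. Since formation of K\"ahler differentials commutes with localization, $\Omega_{A_\mathfrak{m}/k}=0$; Krull's intersection theorem applied to the Noetherian local ring $A_\mathfrak{m}$ yields $\bigcap_n (\mathfrak{m}A_\mathfrak{m})^n = 0$; and the residue-field extension $k \hookrightarrow A/\mathfrak{m}$ is automatically separable because $k$ is perfect. \Cref{the local case} then forces $A_\mathfrak{m}$ to be a field. Since the canonical map $A \to \prod_\mathfrak{m} A_\mathfrak{m}$ is injective (any nonzero $a \in A$ has $\mathrm{Ann}(a) \subsetneq A$, hence lies in some maximal ideal, and then $a/1 \neq 0$ in the corresponding localization) and each $A_\mathfrak{m}$ is reduced, $A$ itself is reduced.

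For (ii), the Noetherian hypothesis trivially implies the hypothesis of (i), so $A$ is reduced and every localization $A_\mathfrak{m}$ at a maximal ideal is a field. Consequently every maximal ideal of $A$ is also a minimal prime, so $\dim A = 0$. A Noetherian ring of Krull dimension zero is Artinian, hence decomposes canonically as a finite product $A \cong \prod_{i=1}^n A_{\mathfrak{m}_i}$ of its (finitely many) maximal-ideal localizations; by the preceding paragraph each factor is a field $K_i = A/\mathfrak{m}_i$.

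It remains to identify each $K_i$ as a \emph{perfect} field. Localizing the hypothesis $\Omega_{A/k}=0$ at $\mathfrak{m}_i$ gives $\Omega_{K_i/k}=0$, which forces $k \subseteq K_i$ to be separable algebraic: any transcendental element, or any algebraic but inseparable element, of $K_i$ would contribute a nonzero element to $\Omega_{K_i/k}$. Since every algebraic extension of a perfect field is itself perfect, each $K_i$ is perfect, and the proof is complete.

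The only step requiring any input beyond a direct application of \Cref{the local case} is the last one: translating $\Omega_{K/k}=0$ for a field extension $K/k$ into ``$K/k$ is separable algebraic'', and then invoking the fact that algebraic extensions of perfect fields are again perfect. Both are classical, so no substantive obstacle arises.
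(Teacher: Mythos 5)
Your part (i) and the bulk of part (ii) are correct and essentially follow the paper's route: localize at a maximal ideal, use that K\"ahler differentials commute with localization, Krull's intersection theorem, and \Cref{the local case} to see each $A_{\mathfrak m}$ is a field; your passage to the finite product via ``dimension zero Noetherian implies Artinian'' is a harmless variant of the paper's Chinese Remainder argument. The genuine problem is the last step. You claim that $\Omega_{K_i/k}=0$ forces $k\subseteq K_i$ to be \emph{separable algebraic}, because ``any transcendental element \dots would contribute a nonzero element to $\Omega_{K_i/k}$.'' This is true in characteristic zero (a derivation on $k(x)$ with $x$ transcendental extends to $K_i$), but it is false in characteristic $p$: take $k=\mathbb F_p$ and $K=\mathbb F_p(t^{1/p^{\infty}})$, the perfection of $\mathbb F_p(t)$. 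Then $K$ is perfect, so every element is a $p$-th power and $dx=d\bigl((x^{1/p})^p\bigr)=0$ for all $x$, whence $\Omega_{K/\mathbb F_p}=0$, yet $t$ is transcendental over $\mathbb F_p$. Indeed the paper explicitly remarks, right after \Cref{the local case}, that in positive characteristic formally unramified does not imply integral over the ground field; this is exactly why the conclusion of part (ii) is ``product of \emph{perfect} fields'' rather than ``product of separable algebraic extensions of $k$.''

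The conclusion you need ($K_i$ perfect) is nonetheless true, but it must be reached without the algebraicity claim. In characteristic zero there is nothing to do, since every field of characteristic zero is perfect. In characteristic $p$, use the fact (as the paper does, citing \cite[Tag 031U]{Stacks}): for $k$ perfect of characteristic $p$ and a field extension $k\subseteq K$, one has $dx=0$ in $\Omega_{K/k}$ if and only if $x$ has a $p$-th root in $K$. Hence $\Omega_{K_i/k}=0$ forces $K_i=K_i^p$, i.e.\ $K_i$ is perfect. With that substitution your argument is complete.
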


\begin{proof}[Proof of Corollary \ref{structure of locally Noetherian formally unramified algebra over a perfect
field}]
For (i),  observe that it  is enough to show that the localization  $A_m$ is reduced where $m$ is an arbitrary maximal ideal of $A$. Since the formation of module of K\"ahler differentials commutes with localization  \cite[Prop 16.9]{Eis}, we have $\Omega_{A_m/k}=0$. By Krull's intersection theorem, we know  $\underset{n \in \mathbb{N}}{\bigcap}m^nA_m=0$, whence   \Cref{the local case} implies that  $A_m$ is reduced.

For (ii), note first that Theorem \ref{the local case} implies immediately that the local ring of $A$ at a maximal ideal is a field. So every maximal ideal of $A$ is minimal. Since $A$ is Noetherian, $A$ has only finitely many minimal--and hence maximal-- ideals,  say $m_1, \ldots, m_r$. The Chinese Remainder Theorem tells us that
the canonical map
$$A \rightarrow \prod \limits_{i=1}^r A/{m_i}$$
 is surjective with  kernel equal to  the nilradical of $A$. 
 By part (i), $A$ is reduced, and hence this  map is an isomorphism, and   $A$ is a product of fields. 
 It remains only to show that each $A/m_i$ is perfect.  When $k$ has characteristic zero this is immediate. When $k$ has positive characteristic, observe that $\Omega_{A/k} = \bigoplus_{i=1}^r \Omega_{(A/m_i)/k}$, so that each  $\Omega_{(\frac{A}{m_i})/k}=0$.
 But now we can invoke the following lemma
 to complete the argument: 
 {\it{
Let $k$ be a perfect field of characteristic $p>0$, and suppose $k \subseteq K$ is a field extension. For $x \in K$, $dx=0 \in \Omega_{K/k}$ if and only if $x$ has a $p$-th root in $K$}}  \cite[Tag 031U]{Stacks}.
\end{proof}

 \Cref{the local case} follows from the following special case:
\begin{lemma}\label{some power of the maximal ideal is zero}
Let $(R,m,k)$ be a local algebra over a field $L$ such that the field extension given by the composite $L \rightarrow R \rightarrow k$ is (possibly non-algebraic) separable. If some power of $m$ is zero and $\Omega_{R/L}=0$, then $R$ is a field.
\end{lemma}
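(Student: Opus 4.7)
The plan is to show that $m=0$, so that $R=k$ is a field. The strategy is to establish that the natural map
$$\alpha: m/m^2 \longrightarrow \Omega_{R/L} \otimes_R k$$
is injective. Once this is known, the hypothesis $\Omega_{R/L}=0$ forces $m/m^2=0$, hence $m = m^2 = m^4 = \cdots = m^{2^n}$ for every $n$. Since some power of $m$ vanishes by hypothesis, taking $n$ large enough yields $m=0$.

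The injectivity of $\alpha$ will come from the existence of a \emph{coefficient field}. Because $L\subseteq k$ is a separable field extension, the standard characterization of separability tells us that $L\to k$ is formally smooth. The projection $R\twoheadrightarrow R/m = k$ has nilpotent kernel, so formal smoothness allows us to lift the identity $k\to R/m$ to an $L$-algebra section $\sigma: k\hookrightarrow R$ of the projection, yielding an $L$-module decomposition $R = \sigma(k) \oplus m$.

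With this decomposition in hand, I compute $\mathrm{Der}_L(R,M)$ for a $k$-module $M$ regarded as an $R$-module via $R\twoheadrightarrow k$. Any $L$-derivation $d: R \to M$ is determined by the pair $(d\circ\sigma,\ d|_m)$: the first component is an $L$-derivation of $k$ into the $k$-module $M$, i.e., an element of $\mathrm{Hom}_k(\Omega_{k/L}, M)$, while the second is $k$-linear and kills $m^2$ (both follow from the Leibniz rule once one observes that $m$ acts by zero on $M$). Conversely, any such pair assembles into an $L$-derivation via the splitting. Hence
$$\mathrm{Der}_L(R, M) \;\cong\; \mathrm{Hom}_k(\Omega_{k/L}, M) \;\oplus\; \mathrm{Hom}_k(m/m^2, M),$$
and by the Yoneda lemma, $\Omega_{R/L}\otimes_R k \cong \Omega_{k/L} \oplus m/m^2$, exhibiting $m/m^2$ as a direct summand; in particular, $\alpha$ is injective. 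The main obstacle is producing the coefficient field $\sigma$, which rests on the nontrivial equivalence between separability and formal smoothness for field extensions; the remaining arguments amount to bookkeeping with the Leibniz rule on the direct-sum decomposition.
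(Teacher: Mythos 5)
Your proof is correct and takes essentially the same route as the paper: both use separability of $L\subseteq k$ (formal smoothness, i.e.\ the input behind the Cohen structure theorem) to produce a coefficient field $\sigma\colon k\hookrightarrow R$, and then deduce $m=m^2$, hence $m=0$ by nilpotence. The only difference is cosmetic: where the paper reduces to $\Omega_{R/k}$ and cites the isomorphism $m/m^2 \cong R/m\otimes_R\Omega_{R/k}$, you re-derive the needed splitting $\Omega_{R/L}\otimes_R k \cong \Omega_{k/L}\oplus m/m^2$ by computing $\mathrm{Der}_L(R,M)$ directly.
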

\begin{proof}[Proof of Lemma] This is slightly subtle since we can not assume  the extension $L\hookrightarrow k$ is algebraic.  Because, some power of $m$ is zero, the local  ring $(R,m)$ is complete. Following the proof of Cohen structure theorem \cite[Tag 0323]{Stacks}, we can find an $L$-algebra map $k \rightarrow R$ such that the composite $k \rightarrow R \rightarrow R/m$ is an isomorphism. In this case, because $(R, m)$ contains a copy of its residue field $k$, we have an 
isomorphism (see  \cite[Tag 0B2E]{Stacks})
\begin{equation}\label{iso}
m/m^2 \rightarrow  R/m \otimes_R \Omega_{R/k}.
\end{equation}
 Now, since $L\subseteq k \subseteq R$, 
our assumption that $\Omega_{R/L}=0$ implies also   $\Omega_{R/k}=0$.
So the isomorphim (\ref{iso}) implies 
that $m=m^2$, and hence $m=m^n$ for all natural numbers $n$. Combined with the assumption that some power of $m$ is zero, we conclude that $m=0$. That is,  $R$ is a field.
\end{proof}

\begin{proof}[Proof of Theorem \ref{the local case}]
Let $R' = R/m^2$ and note that $R'$ satisfies the hypothesis of Lemma \ref{some power of the maximal ideal is zero}. Since $R\rightarrow R'$ is surjective, also 
$\Omega_{R/L} \rightarrow \Omega_{R'/L}$ is surjective.  So our hypothesis that $\Omega_{R/L}  = 0$   implies that also $\Omega_{R'/L}=0$.  Now using  \Cref{some power of the maximal ideal is zero}, we see that the maximal ideal of $R/m^2$ is zero. So $m=m^2$ in $R$, from which it follows that  $\bigcap_{n\in\mathbb{N}}m^n = m$ in $R$. But now our hypothesis that $R$ is $m$-adically separated implies that 
 $m=0$, completing the proof that  $R$ is a field.
\end{proof}


\begin{remark}
If an essentially finite type algebra over a field is formally unramified, then the algebra is integral over the field  \cite[Tag 02G3]{Stacks}. One might wonder whether the same is true with out the essentially finite type hypothesis. Although in positive characteristic this need not be the case, in characteristic zero this turns out to be true (see \cite{AS}).
\end{remark}

\vspace{5mm}
\subsection{The graded case.}

Fix a field $k$. By ``$\mathbb N$-graded $k$-algebra"  we mean a $k$-algebra $R$ whose underlying additive group 
admits a decomposition  $\bigoplus_{n\in \mathbb N} R_n$ with the property that $R_n\cdot R_m \subset R_{m+n}$ for all $m, n\in \mathbb N$.  Specifically, we do not make the common assumption  $k\subset R_0$.

\begin{theorem}\label{positive answer} Fix a perfect ground field $k$.
Let $R$ be an $\mathbb{N}$-graded $k$-algebra for which the subring $R_0$ is Noetherian. If $R$ is formally unramified over $k$, then $R=R_0$ and $R$ is reduced.
\end{theorem}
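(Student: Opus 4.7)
My plan is to prove $R = R_0$ directly; reducedness of $R$ then follows from $R_0$ being a finite product of perfect fields (as established along the way).

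\textbf{Step 1: Reduce to $R_0=L$ a single perfect field.} The canonical projection $\pi\colon R\twoheadrightarrow R_0$ killing the positive-degree ideal $R_+$ is a $k$-algebra retraction of the inclusion $R_0\hookrightarrow R$ (where $R_0$ carries the $k$-algebra structure $\pi\circ\phi$ induced from that of $R$). The first fundamental exact sequence of K\"ahler differentials applied to $k\to R\to R_0$ reads
\[
\Omega_{R/k}\otimes_R R_0 \longrightarrow \Omega_{R_0/k} \longrightarrow \Omega_{R_0/R} \longrightarrow 0.
\]
The left term vanishes by hypothesis, and the right term vanishes because $R_0$ is a quotient of $R$. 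Hence $\Omega_{R_0/k}=0$, and by \Cref{structure of locally Noetherian formally unramified algebra over a perfect field}(ii), $R_0=\prod_{i=1}^s L_i$ is a finite product of perfect fields. The orthogonal idempotents of $R_0$ decompose $R=\prod R^{(i)}$ into a product of graded $k$-algebras, each with $(R^{(i)})_0=L_i$ and $\Omega_{R^{(i)}/k}=0$, so I may henceforth assume $R_0=L$ is a single perfect field.

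\textbf{Step 2: Apply the Euler derivation.} Let $E\colon R\to R$ denote the Euler derivation, $E(r) = n\cdot r$ for $r\in R_n$, extended additively. One checks directly that $E$ is a derivation and is $R_0$-linear, and since $k\hookrightarrow R_0=L$, it is also a $k$-derivation. Since $\Omega_{R/k}=0$, the induced map $\Omega_{R/k}\to R$ sending $dr\mapsto E(r)$ is zero, so $n\cdot r=0$ for every homogeneous element $r$ of degree $n$.

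\textbf{Step 3: Characteristic-by-characteristic conclusion.} In characteristic zero, $n\cdot r=0$ immediately implies $R_n=0$ for $n\geq 1$, so $R=R_0$. In characteristic $p>0$, the same relation implies $R_n=0$ only when $p\nmid n$. To handle degrees divisible by $p$, I regrade by $p$, setting $R'_m:=R_{pm}$; this gives $R$ a new $\mathbb{N}$-graded $k$-algebra structure with the same degree-zero part $R_0=L$. The Euler derivation of the new grading is again a $k$-derivation, which by $\Omega_{R/k}=0$ must vanish, showing $R_{pm}=0$ whenever $p\nmid m$. Iterating, $R_n=0$ for every $n\geq 1$, so $R=R_0$.

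\textbf{Main anticipated obstacle.} The delicate point is justifying that the Euler derivation is genuinely $k$-linear---i.e., that the structure map $k\to R$ factors through $R_0$---since the paper's convention does not presuppose $k\subset R_0$. In characteristic $p$, this follows from the perfectness of $k$ via $\Omega_{k/\mathbb{F}_p}=0$, which inductively forces the positive-degree components of $\phi\colon k\to R$ (each of which is a derivation relative to the lower ones) to vanish. In characteristic zero the argument is more subtle; one would likely construct a specific $k$-derivation with values in $R_+/R_+^2$ (or similar) whose vanishing under $\Omega_{R/k}=0$ forces these positive-degree components of $\phi$ to be trivial.
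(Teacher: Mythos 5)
Your Steps 2--3 are essentially the paper's \emph{second} proof of \Cref{positive answer} (the Euler-derivation argument, packaged there as \Cref{kernel of the universal derivation}), but the paper only runs that argument under the additional hypothesis $k\subseteq R_0$, and its unlabeled remark following that proof extends it only to characteristic $p$ or to characteristic zero with $k$ algebraic over $\mathbb{Q}$ --- precisely because of the obstacle you flag yourself. That obstacle is a genuine gap in your write-up: the Euler operator $E$ is a $k$-derivation if and only if the structure map $\phi\colon k\to R$ has vanishing positive-degree components, and nothing in Steps 1--2 establishes this. Your characteristic-$p$ sketch does work (each positive-degree component of $\phi$ is, inductively, a derivation on $k$ over $\mathbb{F}_p$, hence zero since $\Omega_{k/\mathbb{F}_p}=0$ for $k$ perfect), but in characteristic zero $\Omega_{k/\mathbb{Q}}$ is typically nonzero (e.g.\ $k=\mathbb{Q}(t)$ or $\mathbb{C}$), so the component $\phi_1\colon k\to R_1$, which is a $\phi_0$-derivation, has no reason to vanish on formal grounds; and $\Omega_{R/k}=0$ cannot be applied to it directly, since the natural map detecting $\phi_1$ (the projection $R/R_{\geq 2}\to R_1$) is only a $k$-derivation \emph{after} one already knows $\phi_1=0$. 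The cautionary analogue is the paper's \Cref{non-example in the Noetherian case}: a graded ring $A=k\oplus kz$ with structure map $f\mapsto f+f'z$ not factoring through degree zero, which is formally unramified and non-reduced; ruling this out over a perfect field is real content, not a formal manipulation.

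The way to close the gap is to prove $R_n=0$ (hence in particular $\phi_n=0$) degree by degree using \Cref{the local case}: localize the truncation $R/R_{\geq 2}=R_0\oplus R_1$ at a maximal ideal of $R_0$ to get a local $k$-algebra with square-zero maximal ideal and residue field separable over the perfect $k$, conclude $R_1=0$, and iterate. But this is exactly the paper's \emph{first} proof (truncate at $R_{\geq t}$, localize at maximal ideals lying over maximal ideals of $R_0$, apply \Cref{the local case}, whose proof runs through a Cohen-structure/coefficient-field argument), and once you have it the Euler-derivation steps are superfluous. So: your Step 1 is fine, your characteristic-$p$ route can be completed and matches the paper's alternate argument, but in characteristic zero with $k\not\subseteq R_0$ your proposal as written does not prove the theorem, and the missing ingredient is essentially the paper's local theorem applied to truncations rather than any derivation with values in $R_+/R_+^2$.
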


\begin{remark}\label{general}
 Alternatively, as the proof will  show, rather than assuming that $k$ is perfect and $R_0$ is Noetherian in Theorem \ref{positive answer}, we may instead  assume $(R_0, m)$ is local and $m$-adically separated with residue field a separable extension of
 $k$ or other variants which imply that $R_0$ satisfies the hypotheses of  Theorem \ref{the local case}.
 \end{remark}
 
\begin{proof}
 For $t \in \mathbb{N}$, set $R_{\geq t}$ to be the ideal generated by homogeneous elements of degree $t$ or higher. Fix $t \geq 1$ and set $R'= R/R_{\geq t}$. Note that $R'$ is $\mathbb{N}$-graded and formally unramified over $k$,  where the $k$-algebra structure on $R'$ comes via the composition $k \rightarrow R \rightarrow R/R_{\geq t}=R'$. In particular, since $k$ is perfect and $R_0$ is Noetherian, we can apply Corollary \ref{structure of locally Noetherian formally unramified algebra over a perfect
field} to the $t=1$ case to conclude that $R_0$ is a finite product of fields.

\medskip
 We now claim that the inclusion $R_0 \rightarrow R/R_{\geq t} = R'$ is an isomorphism. Once we have this for every $t \geq 1$, it is clear that $R=R_0$ and that $R_0$ is reduced.
 
 \medskip
 To this end, fix $t$ and consider the graded $k$-algebra $R' = R/R_{\geq{t}}.$  To show that $R_0 \rightarrow R'$ is an isomorphism, it suffices to show that for every maximal ideal $m$ of $R'$, the ideal $I  = R_1\oplus R_2\oplus \cdots \oplus R_{t-1}$ of $R'$ becomes zero after localization at $m$. 
 
 Fix an arbitrary maximal ideal $m$ of $R'$. Since the elements of $I$ are all nilpotent, $I$ is contained in every prime of $R'$ including $m$.
 Of course, $m_0 = R_0\cap m$ is also contained in $m$, and since $R_0$ is product of fields we know that $m_0$ is maximal in $R_0$. 
 It follows that our arbitrary ideal $m$ has the form   $ m_0 \oplus I = m_0\oplus R_1\cdots \oplus R_{t-1}.$

 Now to show that the localization $I_m$ is zero, it suffices to show that that the localization 
 $I_{m_0} = I \otimes_{R_0} (R_{0})_{m_0}$ is zero,  since $I_m$ can be obtained from $I_{m_0}$ by further localization at the multiplicatively closed set 
  $R\setminus m \supset R_0\setminus m_0$.  For this,  tensor over $R_0$ with the field  $(R_0)_{m_0} = L_0$ to produce 
 $$
(R_0)_{m_0} \oplus (R_1)_{m_0}\oplus \ldots \oplus(R_{t-1})_{m_0}
  $$
  which we denote by $R'_{m_0}$.  Note that $R'_{m_0}$ is a 
   local algebra over the perfect field $k $ with maximal ideal $ (R_1)_{m_0}\oplus \ldots \oplus(R_{t-1})_{m_0}= I_{m_0}. $
   Furthermore, being a localization of the formally unramified  $k$-algebra $R'$, we know also that $R'_{m_0}$ is formally unramified.
   Finally, because the $t$-th power of the maximal ideal $I_{m_0}$ is zero, we see that $R'_{m_0}$ satisfies the separation hypothesis of 
   Theorem \ref{the local case}. So invoking that theorem, $R_{m_0}'$ is a field, and its maximal ideal $I_{m_0}$ is zero. This completes the proof.

\end{proof}

\medskip

We wish to give another proof of Theorem \ref{positive answer} in the case where $k\subset R_0$ using the following result about the kernel of the universal derivation for a graded ring.

\begin{proposition}\label{kernel of the universal derivation}
Let $R$ be an $\mathbb{N}$-graded ring containing a field $k$. 
\renewcommand{\theenumi}{\roman{enumi}}
\begin{enumerate}
    \item If $k$ has characteristic zero, then the kernel of the universal derivation
    $d: R \rightarrow \Omega_{R/R_0}$ is $R_0$.
    \item If  $k$ has characteristic $p>0$, then  the kernel of the universal derivation $d:R \rightarrow \Omega_{R/R_0}$ is contained in the $p$-th Veronese subring of $R$:
    $$
\text{ker}(d: R \rightarrow \Omega_{R/R_0}) \,\,\subseteq \,\,\underset{j \in \mathbb{N}}{\bigoplus}R_{jp}.$$

\end{enumerate}
\end{proposition}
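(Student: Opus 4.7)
The plan is to exploit the Euler derivation. Define $E : R \to R$ by setting $E(r) = nr$ on homogeneous $r \in R_n$ and extending additively. I would first verify that $E$ is a well-defined $R_0$-linear derivation: additivity is clear from the definition; the Leibniz rule on homogeneous elements $r_m \in R_m$ and $s_n \in R_n$ reads $E(r_m s_n) = (m+n) r_m s_n = E(r_m) s_n + r_m E(s_n)$; and $E$ vanishes on $R_0$, which combined with Leibniz delivers $R_0$-linearity.

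By the universal property of the module of K\"ahler differentials, the $R_0$-derivation $E$ factors uniquely as $E = \phi \circ d$ for some $R$-linear map $\phi : \Omega_{R/R_0} \to R$. In particular $\ker d \subseteq \ker E$, so it remains to compute $\ker E$. If $r = \sum_n r_n$ with $r_n \in R_n$ satisfies $E(r) = 0$, then $\sum_n n r_n = 0$, and comparing graded components forces $n r_n = 0$ for every $n \geq 0$.

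The hypothesis that $k \subseteq R$ is what turns this into a useful conclusion. Any integer $n$ that is nonzero in $k$ is a unit in $k$, hence a unit in $R$. In characteristic zero this applies to every $n \geq 1$, so $r_n = 0$ for all $n \geq 1$ and therefore $r \in R_0$; the reverse inclusion $R_0 \subseteq \ker d$ is immediate, establishing (i). In characteristic $p > 0$ the integer $n$ is a unit in $R$ precisely when $p \nmid n$, so $r_n = 0$ whenever $p \nmid n$, which means $r \in \bigoplus_{j \in \mathbb{N}} R_{jp}$, establishing (ii).

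The only potentially subtle point is confirming that $E$ is genuinely $R_0$-linear, not merely $\mathbb{Z}$- or $k$-linear, since only then does the universal property of $\Omega_{R/R_0}$ produce the factorization through $d$. But this is immediate from the Leibniz rule together with $E|_{R_0} = 0$. Everything else reduces to uniqueness of homogeneous decompositions and to the elementary observation that $k \subseteq R$ supplies inverses for the relevant integers.
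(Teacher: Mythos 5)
Your proof is correct, and it takes a genuinely different route from the paper. The paper chooses a graded presentation $P = R_0[\{X_\alpha\}] \twoheadrightarrow R$, runs the conormal exact sequence to interpret $df=0$ as a relation among partial derivatives modulo the defining ideal, and then applies Euler's homogeneous function theorem in the polynomial ring to a lift $F$ of $f$, arriving at $\deg(f)\cdot f = 0$ in $R$. You instead work intrinsically: the Euler map $E(r_n) = n r_n$ is an $R_0$-derivation of $R$ into itself, so the universal property of $\Omega_{R/R_0}$ factors it as $E = \phi \circ d$, giving $\ker d \subseteq \ker E$ at once; comparing graded components of $E(r)=0$ yields $n r_n = 0$, and the hypothesis $k \subseteq R$ converts this into the stated conclusions exactly as in the paper. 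Your argument is shorter, avoids choosing a presentation, lifts, and the conormal sequence, and it isolates the real content (the Euler derivation kills exactly what the degree annihilates); it also immediately yields the paper's mixed-characteristic variant (Remark \ref{mixed}), since $n r_n = 0$ forces $r_n = 0$ for $n \geq 1$ whenever $R$ is $\mathbb{Z}$-torsion free. What the paper's explicit computation buys is essentially the same identity $\deg(f)\, f = 0$ obtained by hand, in the spirit of Euler's classical theorem; logically the two proofs verify the same fact, yours by a universal-property shortcut. One small point you handled correctly and should keep explicit: $E$ is well defined because each element has only finitely many nonzero homogeneous components, the Leibniz rule extends from homogeneous elements by biadditivity, and $E$ vanishes on $R_0$, which is what makes it an $R_0$-derivation eligible for the universal property.
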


\begin{proof}[Proof that Proposition \ref{kernel of the universal derivation} implies Theorem \ref{positive answer} when $k\subset R_0$]
Because $\Omega_{R/k} = 0$, we immediately know that  $\Omega_{R/R_0} = 0$ as well. 
 We claim that this implies that $R=R_0$. 
 Indeed, in this case,
 the universal derivation 
 $$R \overset{d}\longrightarrow \Omega_{R/R_0}$$
is zero. So using  \Cref{kernel of the universal derivation}, we see that 
 \begin{enumerate}
     \item In characteristic zero, $R_0 = R$, establishing the claim immediately; whereas
     \item In characteristic $p>0$,  any non-zero homogeneous element of $R$
     has degree a multiple of $p$.       But then we can re-grade $R$,
     setting its degree $j$-th piece to be $R_{jp}$. With this new grading, Theorem \ref{kernel of the universal derivation} again implies that the non-zero homogeneous elements of $R$ have degree a multiple of $p$, which of course means their degrees are multiples of  $p^2$ using the original grading. Again regrading and  iterating this procedure, we see that any homogeneous element of $R$ must have degree a multiple of $p^e$ for all $e$. This forces $R=R_0$.
 \end{enumerate}

 Finally, since $R=R_0$ is Noetherian and $k$ is perfect, Theorem \ref{positive answer} follows immediately from Corollary \ref{structure of locally Noetherian formally unramified algebra over a perfect field}.  
 \end{proof}

 \medskip
 \begin{remark} Even if $k$ is not assumed to be contained in $R_0$, Proposition \ref{kernel of the universal derivation}  can be adapted to prove Theorem \ref{positive answer} in characteristic $p>0$ or in characteristic zero if we assume $k$ is algebraic over its prime field.
 
 Indeed, because the multiplicative identity of $R$ must have degree zero, the prime field $\mathbb F$ of $k$ (which is $\mathbb F_p$ in characteristic $p>0$ or $\mathbb Q$ in characteristic zero) is contained in  $R_0$. 
 Our assumptions on $k$ imply that $\Omega_{k/\mathbb F}$ is zero \cite[Thm 25.3]{Matsumura} whereas our hypothesis that $R$ is formally unramfied over $k$ ensures that $\Omega_{R/k}$ is zero. 
 So the 
 exact sequence \cite[Prop 16.2]{Eis}
 $$ 
 R \otimes_k \Omega_{k/\mathbb F} \rightarrow \Omega_{R/\mathbb F} \rightarrow \Omega_{R/k} \rightarrow 0
$$
 guarantees that also $ \Omega_{R/\mathbb F} $ is zero. Since $\mathbb F \subset R_0$,  arguing as in (1) and (2), we can conclude that $R$ is reduced and concentrated in degree zero from \Cref{kernel of the universal derivation}.
 \end{remark}
  
 \begin{remark}\label{reduced} 
 We can use the line of argument sketched in points (1) and (2) above to prove the reducedness of a graded algebra, without any Noetherian assumptions, in the following context. \textit{Suppose that $R$ is an $\mathbb{N}$-graded algebra.  If $R$ is formally unramified over any field $k$ contained in $R_0$, then $R=R_0$. Thus if we further assume $R_0$ is reduced, we get $R$ is also reduced. } See also Remark \ref{mixed} for a statement when $R$ does not contain a field.
 \end{remark}
  
\medskip
It remains only to  prove \Cref{kernel of the universal derivation}.
For this, we make  use of the {\bf Euler operator}, suitably interpreted for a (possibly) infinitely generated polynomial ring over an arbitrary ground ring. The next lemma tells us that the Euler operator behaves especially well on homogeneous elements.

\begin{lemma}\textbf{[Euler's Homogeneous Function Theorem]}\label{Euler operator}
Let $P = A[\{X_{\alpha}\}]$ be a polynomial ring over an arbitrary ground ring $A$ in (possibly infinitely many) variables $X_{\alpha}$ indexed by the set $\Lambda$. Assume that $P$ is $\mathbb N$-graded, with  each $X_{\alpha}$ homogeneous of non-zero degree and elements of the coefficient ring $A$ has degree zero. Then for any homogeneous $G \in P$,
\begin{equation}\label{Euler}
\underset{\alpha \in \Lambda}{\sum}\text{deg}(X_{\alpha})\cdot X_{\alpha}\cdot\frac{\partial{G}}{\partial{X_{\alpha}}}= \text{deg}(G)\cdot G.
\end{equation}
\end{lemma}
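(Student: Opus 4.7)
The plan is to reduce this identity to the case of a single monomial and then verify it by a direct computation. The sum on the left-hand side of \eqref{Euler} is a priori over the possibly infinite index set $\Lambda$, so the first order of business is to observe that it is actually a finite sum: any polynomial $G\in P = A[\{X_\alpha\}]$ involves only finitely many variables, so $\frac{\partial G}{\partial X_\alpha}=0$ for all but finitely many $\alpha \in \Lambda$. In particular, we may fix a finite subset $\alpha_1,\ldots,\alpha_n \in \Lambda$ of indices containing the support of $G$, and work inside the subring $A[X_{\alpha_1},\ldots,X_{\alpha_n}]$, which is a graded subring of $P$.

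Next, I would reduce to the case where $G$ is a single monomial. Since the desired equation \eqref{Euler} is $A$-linear in $G$, and since every homogeneous element of $A[X_{\alpha_1},\ldots,X_{\alpha_n}]$ of degree $d$ is an $A$-linear combination of monomials $X_{\alpha_1}^{e_1}\cdots X_{\alpha_n}^{e_n}$ of total weighted degree $d$ (i.e., with $\sum_i e_i\deg(X_{\alpha_i})=d$), it suffices to verify the identity in the case $G = X_{\alpha_1}^{e_1}\cdots X_{\alpha_n}^{e_n}$ for non-negative integer exponents $e_i$. Here the coefficient from $A$ can be pulled out since the identity is linear; what matters is that elements of $A$ are assigned degree zero so that $\deg(G)$ depends only on the exponents $e_i$ and the degrees of the variables.

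Finally, the verification for a monomial is a direct calculation: the standard rule $\frac{\partial}{\partial X_{\alpha_i}}(X_{\alpha_1}^{e_1}\cdots X_{\alpha_n}^{e_n}) = e_i X_{\alpha_1}^{e_1}\cdots X_{\alpha_i}^{e_i-1}\cdots X_{\alpha_n}^{e_n}$ gives
\[
X_{\alpha_i}\cdot \frac{\partial G}{\partial X_{\alpha_i}} \;=\; e_i\, G,
\]
and summing against the degrees of the variables yields
\[
\sum_{i=1}^n \deg(X_{\alpha_i})\cdot X_{\alpha_i}\cdot \frac{\partial G}{\partial X_{\alpha_i}} \;=\; \Bigl(\sum_{i=1}^n e_i\deg(X_{\alpha_i})\Bigr) G \;=\; \deg(G)\cdot G,
\]
where the last equality uses that $\deg(G) = \sum_i e_i \deg(X_{\alpha_i})$ because the variables are homogeneous of the stated degrees and the coefficient ring $A$ sits in degree zero.

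There is no real obstacle here; the only things to be careful about are (a) making sense of the a priori infinite sum over $\Lambda$, handled by the finiteness of the support of any polynomial, and (b) making sure that the definition of $\deg(G)$ matches the sum $\sum_i e_i \deg(X_{\alpha_i})$, which is guaranteed by the hypothesis that $A$ lies in degree zero and the $X_\alpha$ are homogeneous. No assumption on the characteristic of $A$, on Noetherianness, or on the cardinality of $\Lambda$ is needed.
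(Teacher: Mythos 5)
Your proof is correct and follows essentially the same route as the paper's: observe the sum is finite since $G$ involves only finitely many variables, reduce by $A$-linearity to the case of a single monomial, and verify the identity there by direct computation. You simply spell out the monomial calculation that the paper leaves to the reader.
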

\begin{proof} First note that since $G$ involves only finitely many $X_{\alpha}$, the sum  on the left side is finite. To verify Equation (\ref{Euler}), it suffices to  check  the case when $G$ is a monomial in $X_{\alpha}$, where it  follows  from a direct calculation.
\end{proof}

\medskip
\begin{proof}[Proof of \Cref{kernel of the universal derivation}]

Fix an $\mathbb N$-graded algebra $R$.  Note that, considered as a graded algebra over the subring $R_0$ of degree zero elements,  $R$ can be generated by homogeneous elements, say $\{r_{\alpha}\}_{\alpha\in \Lambda},$ of positive degree (where $\Lambda$ is some arbitrary indexing set).  We thus have a graded $R_0$-algebra presentation for $R$
$$
P := R_0[\{X_{\alpha}\}]_{\alpha\in \Lambda} \,\, \overset{\pi}\twoheadrightarrow \,\, R
$$
sending each variable $X_{\alpha}$ of the polynomial ring $P$ to the correspondingly-indexed element $r_{\alpha}$ in our generating set. This presentation preserves degree provided we grade $P$ so that $X_{\alpha}$ is assigned degree equal to the degree of $r_{\alpha}$. Of course $\pi$ induces an isomorphism  $P/I\cong R$, so we can identify $R$ with $P/I$ as graded rings.  For the remainder of the argument, we adopt the convention that upper case letters denote elements of $P$ and lower case letters are their corresponding images in $P/I=R$.

Let us now examine the universal derivation $d: R\rightarrow \Omega_{R/R_0}$. Since $\Omega_{R/R_0}$ carries an induced grading which makes $d$ degree preserving, the kernel of $d$ is generated by homogeneous
elements. Let $f\in R = P/I$ be a homogeneous element in $\ker d$ and let $F$ be a homogeneous lift to the polynomial ring $P$.

Because $P$ is a polynomial ring over $R_0$, the module of K\"ahler differentials $\Omega_{P/R_0}$ is a free $P$-module on the generators $dX_{\alpha}$ (where $\alpha$ ranges through  $\Lambda$).  Using the
the conormal exact sequence \cite[Prop 16.3]{Eis}, 
$$
 R \otimes_P I  \xrightarrow{1\otimes d} R \otimes_P \Omega_{P/R_0} \rightarrow \Omega_{R/R_0} \rightarrow 0,
$$
we see that  $df = 0$  in 
$ \Omega_{R/R_0}$ means that 
we can find  homogeneous $G_1, \ldots, G_m \in I\subset P$ and  $h_1, \ldots, h_m \in R$ and such that
$$1 \otimes dF \, =   \sum \limits_{i=1}^m h_i\otimes dG_i \, \in \,  R \otimes_P \Omega_{P/R_0}\,  \cong \, \bigoplus_{\alpha\in \Lambda} R \, dX_{\alpha}.
$$
Explicitly, we unravel this by computing in the free module $R\otimes \Omega_{P/R_0}$ that
\begin{equation}\label{eq1}
\underset{\alpha \in \Lambda}{\sum}\frac{\partial{F}}{\partial {X_{\alpha}}}dX_{\alpha} \, = \, \sum \limits_{i=1}^{m}h_i(\underset{\alpha \in \Lambda}{\sum}\frac{\partial{G_i}}{\partial{X_{\alpha}}}dX_{\alpha}) \, \, \in \, \underset{\alpha \in \Lambda}{\bigoplus} R\, dX_{\alpha}, 
\end{equation}
where the coefficients $\frac{\partial{F}}{\partial {X_{\alpha}}}$ and
$\frac{\partial{G_i}}{\partial {X_{\alpha}}}$ should be understood as their natural images  in $P/I = R$. 
Comparing the coefficients of the free generators $dX_{\alpha}$ on either side of Equation (\ref{eq1}), we see that 
$$\frac{\partial{F}}{\partial{X_{\alpha}}}- \sum \limits_{i=1}^m H_i \frac{\partial{G_i}}{\partial{X_{\alpha}}}  \in I
$$
where each $H_i$ is a lift of $h_i$ to $P$.
Multiplying by $X_{\alpha}$ and summing up to get the Euler operator, we 
have
$$
 \underset{\alpha \in \Lambda}{\sum} \text{deg}(X_{\alpha})\cdot X_{\alpha}\cdot \frac{\partial{F}}{\partial{X_{\alpha}}}  \,\,\, - \,\,\, \sum \limits_{i=1}^m H_i(\underset{\alpha \in \Lambda}{\sum} \text{deg}(X_{\alpha})\cdot X_{\alpha}\cdot \frac{\partial{G_i}}{\partial{X_{\alpha}}}) \,\, \in \, \, I 
 $$
 so that in light of \Cref{Euler operator}, we have that 
 $$
\text{deg}(F)\cdot F  -\sum \limits_{i=1}^m H_i\cdot (\text{deg}(G_i))\cdot G_i \,\, \in \,\, I. 
$$
In particular, since the $G_i\in I$,  we see that  $\text{deg}(F) F  \in I,$ and we can conclude that  
\begin{equation}\label{eq3}
\text{deg}(f) f  = 0 
\end{equation}
in $R$.

The proof of the proposition now follows easily. 
In  case (i),  $\deg{f}$ is a unit in $R$  if it is non-zero. So no positive degree $f$ can be in the kernel of $d:R\rightarrow \Omega_{R/R_0}$. That is,   $\text{ker}(d)= R_0$.
In case (ii), any natural numbers coprime to $p$ are units. So we similarly conclude that if a non-zero homogeneous element of $R$ is in the kernel of $d$, then its degree must be a multiple of $p$. That is
the kernel of the universal derivation
$d:R\rightarrow \Omega_{R/R_0}$
is contained in the Veronese subring $\bigoplus_{j\in \mathbb N} R_{pj}.$
This completes the proof. 
\end{proof}

\begin{remark}\label{mixed}
Our proof gives the following (possibly) mixed-characteristic version of 
Proposition \ref{kernel of the universal derivation}: {\it 
{Let $R$ be an $\mathbb{N}$-graded ring torsion free over $\mathbb Z$.
Then the kernel of the universal derivation $d: R\rightarrow \Omega_{R/R_0}$ is $R_0$.}} 
In particular, we can also deduce the following version of Theorem \ref{positive answer}: {\it {Let $R$ be an $\mathbb N$-graded ring without $\mathbb Z$-torsion. If $R$ is formally unramified over any subring contained in $R_0$, then $R=R_0$, so $R$ is reduced if $R_0$ is reduced.}}
\end{remark}

\section{Some Related Questions}
\noindent We end this note by mentioning a related question. The module of Kahler differentials $\Omega_{A/k}$  is the zeroth cohomology of the cotangent complex $\mathbb{L}_{A/k}.$ So a stronger condition than assuming that $\Omega_{A/k}=0$ would be that the entire 
complex $\mathbb{L}_{A/k}$ is exact. The following question appears as Question C.3, (ii), in \cite{AWS}, where it attributed to Bhargav Bhatt (also see \cite{Bhatt}).
\\\\
\textbf{Question 2:} 
Let $A$ be a $\mathbb{Q}$-algebra such that the cotangent complex $\mathbb{L}_{A/\mathbb{Q}}$ is quasi-isomorphic to the zero complex. Is $A$ reduced?

\end{document}